\theoremstyle{thmstyleone}%
\newtheorem{theorem}{Theorem}
\newtheorem{lemma}[theorem]{Lemma}%
\theoremstyle{thmstyletwo}%
\theoremstyle{thmstylethree}%
\newcommand{\orcid}[1]{\href{https://orcid.org/#1}{\textcolor[HTML]{A6CE39}{\aiOrcid}}}
\begin{document}

\title[Orbit growth of sofic shifts]{A short note on the orbit growth of sofic shifts}


\author*[1]{\fnm{Azmeer} \sur{Nordin}}\email{nfazmeer.nordin@siswa.ukm.edu.my}

\author[1]{\fnm{Mohd Salmi Md} \sur{Noorani}}\email{msn@ukm.edu.my}
\equalcont{This author contributed equally to this work.}

\affil[1]{\orgdiv{Department of Mathematical Sciences}, \orgname{Universiti Kebangsaan Malaysia}, \orgaddress{\city{Bangi}, \postcode{43600}, \state{Selangor}, \country{Malaysia}}}


\abstract{A sofic shift is a shift space consisting of bi-infinite labels of paths from a labelled graph. Being a dynamical system, the distribution of its closed orbits may indicate the complexity of the space. For this purpose, prime orbit and Mertens' orbit counting functions are introduced as a way to describe the growth of the closed orbits. The asymptotic behaviours of these counting functions can be implied from the analiticity of the Artin-Mazur zeta function of the space. Despite having a closed-form expression, the zeta function is expressed implicitly in terms of several signed subset matrices, and this makes the study on its analyticity to be seemingly difficult. In this paper, we will prove the asymptotic behaviours of the counting functions for a sofic shift via its zeta function. This involves investigating the properties of the said matrices. Suprisingly, the proof is rather short and only uses well-known facts about a sofic shift, especially on its minimal right-resolving presentation.}

\keywords{Sofic shift, prime orbit counting function, Mertens' orbit counting functions, Artin-Mazur zeta function, minimal right-resolving presentation}


\pacs[MSC Classification]{37C35, 37C30, 37B10}

\maketitle

\section{Introduction}

Let $X$ be a compact metric space equipped with a continuous map $T:X \rightarrow X$. For the discrete dynamical system $(X,T)$, recall that a point $x \in X$ is periodic with period $n \in \mathbb{N}$ if $T^n(x)=x$. Moreover, if $T^k(x) \neq x$ for $k = 1,2,\hdots,n-1$, then it has the prime period $n$. Its (prime) closed orbit is then defined as
$$\tau(x) = \left\{x, T(x), \hdots, T^{n-1}(x)\right\}.$$

In many instances, the distribution of closed orbits are closely related to the complexity of a system. For this reason, some counting functions are introduced as a way to describe the growth of the closed orbits. These are called the \emph{prime orbit counting function}
$$\pi_T(N)=\sum_{\substack{\tau \\ \lvert \tau \rvert \leq N}} 1$$
and the pair of \emph{Mertens' orbit counting functions}
$$\mathcal{M}_T(N)=\prod_{\substack{\tau \\ \lvert\tau\rvert\leq N}} \left(1-\frac{1}{e^{h\lvert\tau\rvert}}\right) \quad\textrm{and}\quad \mathscr{M}_T(N)=\sum_{\substack{\tau \\ \lvert\tau\rvert\leq N}}\frac{1}{e^{h\lvert\tau\rvert}}$$
where $N \in \mathbb{N}$, $h$ is the topological entropy of the system (which is assumed to be positive) and $\tau$ runs through the closed orbits of size $\lvert\tau\rvert$.

These functions arise as the analogues for the counting functions for primes in number theory. In particular, the prime number theorem and Mertens' theorem state that
$$\sum_{\substack{p \\ p\leq N}}1 \sim \frac{N}{\ln N}, \quad \prod_{\substack{p \\ p\leq N}}\left(1-\frac{1}{p}\right) \sim \frac{e^{-\gamma}}{\ln N} \quad\textrm{and}\quad \sum_{\substack{p \\ p\leq N}} \frac{1}{p} = \ln \ln N + M +o(1)$$
where $\gamma$ and $M$ are Euler-Mascheroni constant and Meissel-Mertens constant, respectively, and $p$ runs through primes (see \cite{Hardy-Wright}). These theorems motivate an analogous problem in the theory of dynamical systems, which is to obtain the asymptotic behaviours of the orbit counting functions for a system (see \cite{Parry-Pollicott}).

As an approach, the analysis on the Artin-Mazur zeta function \cite{Artin-Mazur} of a system may lead to the desired asymptotic results. For a system $(X,T)$, its Artin-Mazur zeta function is defined as
$$\zeta_T(z) = \exp \left(\sum_{n=1}^{\infty}\frac{F_T(n)}{n}z^n\right)$$
where $z \in \mathbb{C}$ (whenever it converges) and $F_T(n)$ is the number of periodic points of period $n$. The next theorem summarizes on how the analiticity of the zeta function implies the asymptotic behaviours of the orbit counting functions.
\begin{theorem}[\cite{Nordin-Noorani_PFT}]\label{theorem: zeta function approach}
	Let $(X,T)$ be a discrete dynamical system with topological entropy $h>0$ and Artin-Mazur zeta function $\zeta_T(z)$. Suppose that there exists a function $\alpha(z)$ such that it is analytic and non-zero for $\lvert z \rvert <Re^{-h}$ for some $R>1$, and
	\begin{equation*}
		\zeta_T(z)= \frac{\alpha(z)}{(1-e^{hp}z^p)^m}
	\end{equation*}
	for $\lvert z \rvert <e^{-h}$ for some $m, p \in \mathbb{N}$. Then,
	\begin{equation*}
		\pi_T(N) \sim  mp \cdot \frac{e^{hp\left(\left \lfloor \frac{N}{p} \right \rfloor +1\right)}}{(e^{hp}-1)N},
		\quad
		\mathcal{M}_T(N) \sim \frac{p^m e^{-m\gamma}}{\alpha(e^{-h}) \cdot N^m} \quad \textrm{and}
	\end{equation*}
	\begin{equation*}
		\mathscr{M}_T(N) = m \ln \left \lfloor \frac{N}{p} \right \rfloor + m \gamma + \ln \alpha(e^{-h}) - C +o(1)
	\end{equation*}
	where $\gamma$ is Euler-Mascheroni constant and $C$ is a positive constant specified as
	\begin{equation}\label{equation: C}
		C = \sum_{\tau} \left(\ln \left(\frac{1}{1-e^{-h\lvert\tau\rvert}}\right) - \frac{1}{e^{h\lvert \tau \rvert}}\right).
	\end{equation}
\end{theorem}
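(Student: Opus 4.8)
The plan is to convert the analytic structure of the factorisation of $\zeta_T$ into sharp counts of closed orbits, and then to sum those counts. Write $P(n)$ for the number of closed orbits of length $n$, so that $F_T(n)=\sum_{d\mid n} d\,P(d)$ and, expanding the exponential defining $\zeta_T$, $\log\zeta_T(z)=\sum_\tau\sum_{k\ge 1} z^{k\lvert\tau\rvert}/k$. Taking logarithms of the hypothesised factorisation gives $\sum_n \frac{F_T(n)}{n}z^n=\log\alpha(z)-m\log(1-e^{hp}z^p)$. Since $\alpha$ is analytic and non-vanishing on $\lvert z\rvert<Re^{-h}$ with $R>1$ and $\alpha(0)=1$, the principal branch $\log\alpha$ is analytic there with vanishing constant term, so its Taylor coefficients $a_n$ obey $a_n=O((e^h/R')^n)$ for any $1<R'<R$. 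Comparing coefficients of $z^n$ then yields the exact identity $F_T(n)=mp\,e^{hn}[p\mid n]+n a_n$ (the bracket $[p\mid n]$ being $1$ if $p\mid n$ and $0$ otherwise), whose remainder $n a_n$ is of exponentially smaller order than the main term $e^{hn}$.

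First I would invert this. M\"obius inversion of $F_T(n)=\sum_{d\mid n} d\,P(d)$ gives $nP(n)=\sum_{d\mid n}\mu(n/d)F_T(d)$; substituting the formula above, the term $d=n$ supplies the main contribution while every proper divisor contributes $O(e^{hn/2})$, giving $P(n)=\frac{mp}{n}e^{hn}[p\mid n]+r(n)$ with $r(n)e^{-hn}$ summable. For the prime orbit counting function, $\pi_T(N)=\sum_{n\le N}P(n)$: the remainder contributes $O(e^{hN/2})$, which is negligible, and the main term is $m\sum_{j=1}^{J} e^{hpj}/j$ with $J=\lfloor N/p\rfloor$. This sum is dominated by its last term, giving $\sum_{j=1}^{J} e^{hpj}/j\sim e^{hp(J+1)}/((e^{hp}-1)J)$; replacing $1/J$ by $p/N$, a change by the factor $pJ/N\to 1$, produces exactly the claimed asymptotic for $\pi_T(N)$.

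For the two Mertens functions I would argue through generating functions. The series $-m\log(1-e^{hp}z^p)=\sum_{p\mid n}\frac{mp}{n}e^{hn}z^n$ is precisely the generating function of the main term of $P(n)$, so substituting $P(n)=\frac{mp}{n}e^{hn}[p\mid n]+r(n)$ into $\log\zeta_T(z)=\sum_n P(n)z^n+\sum_\tau(-\log(1-z^{\lvert\tau\rvert})-z^{\lvert\tau\rvert})$ cancels the logarithmic singularities and leaves $\log\alpha(z)=\sum_n r(n)z^n+\sum_\tau(-\log(1-z^{\lvert\tau\rvert})-z^{\lvert\tau\rvert})$. Letting $z\to e^{-h}$ along the reals and invoking Abel's theorem, using that both right-hand series converge at $z=e^{-h}$ and that the orbit sum there equals the constant $C$ of \eqref{equation: C}, identifies $\sum_n r(n)e^{-hn}=\log\alpha(e^{-h})-C$. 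Hence $\mathscr{M}_T(N)=\sum_{p\mid n,\,n\le N}\frac{mp}{n}+\sum_{n\le N}r(n)e^{-hn}=m\sum_{j\le J}\frac1j+(\log\alpha(e^{-h})-C)+o(1)$, and the harmonic sum $m(\ln J+\gamma)+o(1)$ yields the stated formula for $\mathscr{M}_T(N)$. Finally, the splitting $\log(1-x)=-x-(\log\frac{1}{1-x}-x)$ gives $\log\mathcal{M}_T(N)=-\mathscr{M}_T(N)-C+o(1)$; substituting the formula just obtained, the two copies of $C$ cancel, and exponentiating with $J^{-m}\sim p^m/N^m$ gives $\mathcal{M}_T(N)\sim p^m e^{-m\gamma}/(\alpha(e^{-h})N^m)$.

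The main obstacle is the exact evaluation of the additive constant in the Mertens asymptotics. The harmonic part $m(\ln J+\gamma)$ is routine, but identifying the precise value $\log\alpha(e^{-h})-C$ rather than an anonymous constant relies on the generating-function cancellation above together with a careful Abel argument, so one must verify genuinely that $\sum_n r(n)e^{-hn}$ and the orbit sum defining $C$ both converge, legitimising the boundary evaluation at $z=e^{-h}$. The remaining delicate point is the error bookkeeping in the M\"obius inversion, namely checking that the proper-divisor terms never rival $e^{hn}$ so that $r(n)e^{-hn}$ is summable and the prime orbit error is of smaller exponential order; this is more mechanical but must be carried out to make the asymptotics rigorous.
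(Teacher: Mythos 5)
Your proposal is correct, and it is essentially the argument behind this theorem in the cited source \cite{Nordin-Noorani_PFT} (the present paper only imports the statement without proof): compare coefficients to get $F_T(n)=mp\,e^{hn}[p\mid n]+na_n$ with $a_n$ controlled by Cauchy estimates on $\log\alpha$, M\"obius-invert to count orbits, sum $e^{hpj}/j$ for the prime orbit asymptotic, and use absolute convergence plus Abel's theorem at $z=e^{-h}$ to identify the constant $\ln\alpha(e^{-h})-C$ in the Mertens formulas. The only slip is quantitative and harmless: in the estimate for $\pi_T(N)$ the remainder $\sum_{n\le N}r(n)$ is $O\bigl(N(e^{h}/R')^{N}\bigr)+O\bigl(e^{hN/2}\bigr)$ rather than $O\bigl(e^{hN/2}\bigr)$, since the Cauchy bound on $a_n$ can exceed $e^{hn/2}$ when $R<e^{h/2}$; but this is still $o\bigl(e^{hN}/N\bigr)$, so the stated asymptotics are unaffected.
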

To sum up, we require $\zeta_T(z)$ to satisfy the following: (i) it extends to a meromorphic function for $\lvert z \rvert <Re^{-h}$, (ii) it has no zero in the said region, and (iii) there are exactly $p$ poles in this region, in which they have the same order $m$ and are of the form $\omega e^{-h}$, where $\omega$ runs through $p$th roots of unity.

In the literature, this approach was used to determine the orbit growths of ergodic toral automorphisms \cite{Noorani,Waddington} and several types of shift spaces. These include shifts of finite type \cite{Parry,Parry-Pollicott}, periodic-finite-type shifts \cite{Nordin-Noorani_PFT}, Dyck and Motzkin shifts \cite{Nordin-Noorani-Dzulkifli_DM}, and bouquet-Dyck shifts \cite{Nordin-Noorani_BD}. Similar results can be deduced for beta shifts \cite{Flatto-Lagarias-Poonen}, negative beta shifts \cite{NguemaNdong} and shifts of quasi-finite type \cite{Buzzi}, albeit these findings are not stated in their respective papers.

While we focus on the approach via zeta function, there are other methods to obtain the orbit growth of a system, such as using orbit Dirichlet series \cite{Everest-Miles-Stevens-Ward}, orbit monoids \cite{Pakapongpun-Ward} and estimates on the number of periodic points \cite{Akhatkulov-Noorani-Akhadkulov,Alsharari-Noorani-Akhadkulov_Dyck,Alsharari-Noorani-Akhadkulov_Motzkin}. Furthermore, similar research problem has been studied for group actions on dynamical systems, and some recent results include the orbit growths of nilpotent group shifts \cite{Miles-Ward}, algebraic flip systems \cite{Miles} and flip systems for shifts of finite type \cite{Nordin-Noorani_flip}. Since our introduction on this subject is rather short, we encourage readers to explore those papers above, and additionally the expository chapters by us \cite{Nordin-Noorani-Dzulkifli} and Ward \cite{Ward}.

Our attention now is on sofic shifts (see \cite{Lind-Marcus}). Sofic shifts are a class of shift spaces constructed from labelled graphs. Examples are shifts of finite type and periodic-finite-type shifts. The zeta function for these systems has long been known, and in fact, it is a rational function. However, it is implicitly expressed in terms of several signed subset matrices, making it rather sophisticated.

The orbit growth of sofic shifts is yet to be determined completely. A sofic shift with specification property (which is equivalent to topological mixing in this case) is a shift of quasi-finite type, so its orbit growth is deduced from the result in \cite{Buzzi} and Theorem \ref{theorem: zeta function approach} above. However, this finding does not account for irreducible sofic shifts.

Hence, our paper here aims to obtain the orbit growth, i.e. the asymptotic behaviours of the orbit counting functions, for more general sofic shifts. Suprisingly, the work by Lind \& Marcus \cite{Lind-Marcus}, especially on minimal right-resolving presentations of sofic shifts, is sufficient to lead to our finding. At the end, we provide a few remarks on the consequences of our finding.

\section{Sofic Shifts}

In this section, we provide some background on sofic shifts and their important properties. All these facts can found in \cite{Lind-Marcus}. 

Let $G=(V,E)$ be a finite directed graph with vertex set $V$ and edge set $E$. For an edge $e$, denote $i(e)$ and $t(e)$ as its initial and terminal vertices respectively. We assume that each vertex is neither a source or a sink, i.e. it must have incoming and outgoing edges. For a finite set $\mathcal{A}$, let $\mathcal{L}:E \rightarrow \mathcal{A}$ be the label of each edge with an element of $\mathcal{A}$. The pair $\mathcal{G}=(G, \mathcal{L})$ is called a \emph{labelled graph}.

A sequence $\{e_k\}_{k=1}^n$ of edges is called a path of length $n$ if $t(e_{k})=i(e_{k+1})$ for $k=1,2,\hdots,n-1$. The label of the path is the corresponding sequence $\left\{\mathcal{L}(e_k)\right\}_{k=1}^n$. We define a bi-infinite path $\{e_k\}_{k\in \mathbb{Z}}$ and its label $\left\{\mathcal{L}(e_k)\right\}_{k\in \mathbb{Z}}$ in similar way. Denote $\mathcal{P}_\infty(\mathcal{G})$ as the set of bi-infinite paths in $\mathcal{G}$.

Let $\mathcal{A}$ be equipped with the discrete topology. So, the set $\mathcal{A}^{\mathbb{Z}}$ of bi-infinite sequences from $\mathcal{A}$ is equipped with the product topology. The \textit{sofic shift} presented by $\mathcal{G}$ is the set
$$\mathcal{X} = \left\{x \in \mathcal{A}^\mathbb{Z} \mid x = \left\{\mathcal{L}(e_k)\right\}_{k\in \mathbb{Z}} \textrm{ for some } \{e_k\}_{k\in \mathbb{Z}} \in \mathcal{P}_\infty(\mathcal{G})\right\}$$
paired with the (left) shift map $\sigma: \mathcal{X} \rightarrow \mathcal{X}$, which maps $x = \{x_k\}_{k\in \mathbb{Z}}$ to $\sigma(x) = \{x_{k+1}\}_{k\in \mathbb{Z}}$. The graph $\mathcal{G}$ is called a \emph{presentation} of $\mathcal{X}$.

An element $x \in \mathcal{X}$ is a called a \emph{point}. A finite sequence $w=a_1a_2 \hdots a_n$ from $\mathcal{A}$ is called a \emph{word} of length $\lvert w \rvert=n$ if there exist $x \in \mathcal{X}$ and $k\in \mathbb{Z}$ such that $x_k x_{k+1}\hdots x_{k+n-1} = w$. Denote $\mathcal{B}(\mathcal{X})$ as the set of words in $\mathcal{X}$. The space $\mathcal{X}$ is said to be \emph{irreducible} if for all $u, v \in \mathcal{B}(\mathcal{X})$, either $uv \in \mathcal{B}(\mathcal{X})$ or there exists $w \in \mathcal{B}(\mathcal{X})$ such that $uwv \in  \mathcal{B}(\mathcal{X})$.

Any word in $\mathcal{X}$ is the label of a certain path in $\mathcal{G}$. It is possible to have more than one such paths for each word. This is similarly true for points as well. Besides that, the sofic shift itself may be presented by different labelled graphs. We will choose a presentation that has some useful properties.

A labelled graph is said to be \emph{right-resolving} if for every vertex, each outgoing edge has a distinct label. Any sofic shift has a presentation of this form. In fact, we can proceed further to obtain the \emph{minimal right-resolving presentation}, which is the one with the fewest vertices among all right-resolving presentations. For an irreducible sofic shift, this presentation is unique up to graph isomorphism.

A word $w \in \mathcal{B}(\mathcal{X})$ is said to be \emph{synchronising} in $\mathcal{G}$ if all paths with the label $w$ have the same terminal vertex. If $\mathcal{X}$ is irreducible and $\mathcal{G}$ is its minimal right-resolving presentation, then any word $u \in \mathcal{B}(\mathcal{X})$ can be extended into a synchronising word $uv$ in $\mathcal{G}$ for some $v \in \mathcal{B}(\mathcal{X})$. So in this case, the space $\mathcal{X}$ is guaranteed to have a synchronising word in $\mathcal{G}$.

Recall that a non-negative square matrix $A$ is irreducible if for every pair of indices $i$ and $j$, there exists $n \in \mathbb{N}$ such that the $ij$-entry of $A$, written as $(A^n)_{ij}$, is positive. In this case, its period is defined as $\gcd \{n \in \mathbb{N} \mid (A^n)_{ii}>0\}$ for any choice of index $i$. For a labelled graph $\mathcal{G}=(G, \mathcal{L})$, its adjacency matrix $A_\mathcal{G}$ is simply the adjacency matrix of the underlying graph $G$. The irreducibility and period of $\mathcal{G}$ is defined from its adjacency matrix $A_\mathcal{G}$.
A sofic shift is irreducible if and only if its minimal right-resolving presentation is also irreducible.

A right-resolving presentation is useful to determine the topological entropy of a sofic shift. For any choice of its right-resolving presentation $\mathcal{G}$, the topological entropy of $\mathcal{X}$ is given by $h\left(\mathcal{X}\right)=\ln \rho \left(A_\mathcal{G}\right)$, where $\rho \left(A_\mathcal{G}\right)$ denotes the spectral radius of the adjacency matrix $A_\mathcal{G}$.

The Artin-Mazur zeta function of a sofic shift is somehow complicated to be expressed in closed form. For this purpose, let $\mathcal{G}$ be a right-resolving presentation of $\mathcal{X}$ with the underlying graph $G=(V,E)$. For simplicity, we denote the vertex set as $V=\{1,2,\hdots,S\}$ where $S=\lvert V \rvert$. Observe that for each $a \in \mathcal{A}$ and $v \in V$, there is at most one outgoing edge from $v$ with label $a$. If such an edge exists, then we denote the terminal vertex as $v(a)$.

We construct a labelled graph $\mathcal{G}_j$ for $j=1,2,\hdots,S$ from $\mathcal{G}$. Its vertex set $V_j$ is the collection of subsets of $V$ with $j$ distinct vertices. For each subset $v^{(j)} \in V_j$, arrange the vertices in increasing order, i.e. $v^{(j)} = \{v_1,v_2,\hdots,v_j\}$ where $v_1 < v_2 < \hdots < v_j$. Now for each $a \in \mathcal{A}$, there is an edge from $v^{(j)}$ to $\tilde{v}^{(j)} = \{\tilde{v}_1, \tilde{v}_2, \hdots, \tilde{v}_j\}$ if all vertices $v_1(a), v_2(a), \hdots, v_j(a)$ are defined, distinct and contained in $\tilde{v}^{(j)}$. The label of this edge is $a$ if $\{v_1(a), v_2(a), \hdots, v_j(a)\}$ is an even permutation of $\tilde{v}^{(j)}$, while its label is $-a$ otherwise.

Define the \textit{signed subset matrix} $A_j$ from $\mathcal{G}_j$ as follows: for $v^{(j)},\tilde{v}^{(j)} \in V_j$, the $v^{(j)} \tilde{v}^{(j)}$-entry of $A_j$ is the number of edges with positive labels minus the number with negative labels from $v^{(j)}$ to $\tilde{v}^{(j)}$. Note that $\mathcal{G}_1$ is simply $\mathcal{G}$, so $A_1$ is the adjacency matrix $A_\mathcal{G}$.

From the setting above, the Artin-Mazur zeta function of $\mathcal{X}$ is given by
\begin{equation}\label{zeta function}
	\zeta_{\sigma}(z) = \prod_{j=1}^{S} \left(\det\left(I-zA_j\right)\right)^{(-1)^j}
\end{equation}
where $I$ is the identity matrix (of the same size to $A_j$). Since each determinant gives out a polynomial in terms of $z$, the zeta function is a rational function.

\section{Orbit Growth of Sofic Shifts}

Any sofic shift can be decomposed into several irreducible sofic subshifts, which can be studied separately (see \cite{Lind-Marcus}). So, it is sufficient to consider an irreducible sofic shift here. Furthermore, the topological entropy of an irreducible sofic shift is zero if and only if it is finite, and in fact, it is entirely a single closed orbit. To avoid triviality, we can also assume that the topological entropy is positive.

Let $\mathcal{X}$ be an irreducible sofic shift with positive topological entropy. Let $\mathcal{G}$ be its minimal right-resolving presentation. Our assumption implies that its adjacency matrix $A_\mathcal{G}$ is irreducible. Denote $p$ and $\lambda$ as the period and spectral radius of $A_\mathcal{G}$ respectively. The assumption also implies that $\lambda>1$. By Perron-Frobenius theory, there are exactly $p$ eigenvalues of $A_\mathcal{G}$ such that each has modulus $\lambda$, has the form $z_k=\lambda \cdot\exp \left(\frac{2k\pi}{p}\boldsymbol{i}\right)$ for $k=0,1,\hdots,p-1$, and is simple (see \cite{Lind-Marcus}).

Our aim is to apply Theorem \ref{theorem: zeta function approach} to deduce the orbit growth of $\mathcal{X}$. From (\ref{zeta function}), observe that
$$\zeta_{\sigma}(z) = \prod_{j=1}^S \prod_{\mu}(1-\mu z)^{(-1)^j}$$
where $\mu$ runs through non-zero eigenvalues of $A_j$. So, any zero or pole of $\zeta_{\sigma}(z)$ must be in the form $\mu^{-1}$. As per Theorem \ref{theorem: zeta function approach}, we will show that $\zeta_{\sigma}(z)$ can be written as
$$\zeta_{\sigma}(z) = \frac{\alpha(z)}{1-\lambda^p z^p}, \quad \alpha(z) = \prod_{j=1}^S \prod_{\substack{\mu \\ \lvert \mu\rvert < \lambda}}(1-\mu z)^{(-1)^j}$$
where $\alpha(z)$ is analytic and non-zero for $\vert z \rvert<R \lambda^{-1}$, for
$$R = \min_{\substack{\mu \\ \lvert \mu\rvert < \lambda}}\left\{\frac{\lambda}{\lvert \mu\rvert}\right\}.$$
Equivalently, we require that each $z_k^{-1}$ is a simple pole of $\zeta_{\sigma}(z)$, while all zeros and other poles are located beyond radius $\lambda^{-1}$. The former is true based on Perron-Frobenius theory above. For the latter, by comparing spectral radii, it is sufficient to prove that $\rho(A_j)< \lambda$ for $j \neq 1$.

Now for $j=2,3,\hdots,S$, define $\tilde{\mathcal{G}}_j$ as the labelled graph $\mathcal{G}_j$ but all negative labels become positive instead. Note that $\tilde{\mathcal{G}}_j$ is indeed right-resolving. Let $\tilde{A}_j$ be its adjacency matrix. Observe that $\lvert\left(A_j\right)_{uv}\rvert \leq (\tilde{A}_j)_{uv}$ for each $uv$-entry where $u,v \in V^{(j)}$, and hence $\rho(A_j) \leq \rho (\tilde{A}_j)$. We now prove that $\rho (\tilde{A}_j) < \lambda$ for $j \neq 1$.

\begin{lemma}
	$\rho (\tilde{A}_j) < \lambda$ for $j \neq 1$.
\end{lemma}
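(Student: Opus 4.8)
The plan is to recognise $\tilde{A}_j$ as the adjacency matrix of a right-resolving presentation and to control its spectral radius through topological entropy. Since $\tilde{\mathcal{G}}_j$ is right-resolving, it presents some sofic shift $\mathcal{Y}_j$, and the entropy formula for right-resolving presentations gives $h(\mathcal{Y}_j)=\ln\rho(\tilde{A}_j)$. Thus proving $\rho(\tilde{A}_j)<\lambda$ is the same as proving $h(\mathcal{Y}_j)<\ln\lambda=h(\mathcal{X})$. To exploit this, I would first read off what paths of $\tilde{\mathcal{G}}_j$ encode: a path from $v^{(j)}$ labelled by a word $w=a_1\cdots a_n$ amounts to a choice of $j$ distinct vertices on which the partial maps $v\mapsto v(a_k)$ are all defined and remain mutually distinct after reading each letter. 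In particular every label of such a path is the label of a path in $\mathcal{G}$, so $\mathcal{Y}_j\subseteq\mathcal{X}$ and hence $\rho(\tilde{A}_j)\leq\lambda$ already; the real work is to make this inequality strict for $j\neq 1$.

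The key idea is to exploit a synchronising word. Because $\mathcal{X}$ is irreducible and $\mathcal{G}$ is its minimal right-resolving presentation, $\mathcal{X}$ has a synchronising word $s\in\mathcal{B}(\mathcal{X})$, and by definition all paths in $\mathcal{G}$ labelled $s$ share a single terminal vertex. I claim that for $j\geq 2$ the word $s$ is not a word of $\mathcal{Y}_j$. Indeed, a path in $\tilde{\mathcal{G}}_j$ labelled $s$ would supply $j$ distinct vertices $v_1,\ldots,v_j$ whose images after reading $s$ are again $j$ distinct vertices; but each such image is the terminal vertex of a path labelled $s$ in $\mathcal{G}$, and synchronisation forces all these terminal vertices to coincide, contradicting distinctness once $j\geq 2$. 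Consequently no point of $\mathcal{Y}_j$ contains $s$, so $\mathcal{Y}_j$ is contained in the subshift $\mathcal{X}_s\subseteq\mathcal{X}$ of points avoiding $s$. Since $s$ genuinely occurs in $\mathcal{X}$, the subshift $\mathcal{X}_s$ is a proper subshift of $\mathcal{X}$.

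It then remains to convert ``proper subshift'' into a strict entropy gap, and this is the step I expect to be the main obstacle, since everything preceding it is purely structural. Here I would invoke the standard fact that a proper subshift of an irreducible shift has strictly smaller entropy (see \cite{Lind-Marcus}). The cleanest way to apply it is to lift to the shift of finite type cover: the edge shift $X_G$ of the underlying graph $G$ is an irreducible shift of finite type with $h(X_G)=\ln\lambda$, and the preimage of $\mathcal{Y}_j$ under the (entropy non-increasing) label map lands inside the subshift of $X_G$ whose labels avoid $s$. That subshift is genuinely proper, since a path labelled $s$ exists and extends by irreducibility to a bi-infinite point lying outside it, so the strict-entropy result applies and gives it entropy strictly below $\ln\lambda$. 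Pushing back down through the factor map then yields $h(\mathcal{Y}_j)<\ln\lambda$, that is $\rho(\tilde{A}_j)<\lambda$, as required.
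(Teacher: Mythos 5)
Your proposal is correct and follows essentially the same route as the paper: identify $\tilde{A}_j$ as the adjacency matrix of the right-resolving graph $\tilde{\mathcal{G}}_j$ presenting a subshift of $\mathcal{X}$, use a synchronising word to show that this subshift is proper when $j \neq 1$ (the same contradiction: synchronisation forces $j$ distinct terminal vertices to coincide), and conclude by the strict entropy inequality for proper subshifts. The only difference is cosmetic: your detour through the edge-shift cover $X_G$ in the final step is unnecessary, since the strict-entropy fact cited in \cite{Lind-Marcus} applies directly to a proper subshift of the irreducible sofic shift $\mathcal{X}$, which is exactly how the paper concludes.
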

\begin{proof}
	Let $\tilde{\mathcal{X}}_j$ be the sofic shift presented by $\tilde{\mathcal{G}}_j$. It is easy to see that $\tilde{\mathcal{X}}_j \subseteq \mathcal{X}$. Indeed, any bi-infinite path in $\tilde{\mathcal{G}}_j$ gives $j$ corresponding bi-infinite paths in $\mathcal{G}$. All these paths have the same label.
	
	We claim that $\tilde{\mathcal{X}}_j \neq \mathcal{X}$ for $j \neq 1$. Take a synchronising word $w \in \mathcal{B}(\mathcal{X})$ in $\mathcal{G}$. Let $x \in \mathcal{X}$ be a point containing $w$, i.e. there exists $k \in \mathbb{Z}$ such that $x_k x_{k+1}\hdots x_{k+\lvert w \rvert-1} = w$. Suppose, for contradiction, that $x \in \tilde{\mathcal{X}}_j$. A bi-infinite path $\tilde{p}$ in $\tilde{\mathcal{G}}_j$ with label $x$ gives bi-infinite paths $p_1, p_2, \hdots, p_j$ in $\mathcal{G}$. Since $w$ is synchronising in $\mathcal{G}$, the edges at coordinate $k+\lvert w \rvert-1$ for the paths $p_1,p_2, \hdots,p_j$ have the same terminal vertex $v \in V$ in $\mathcal{G}$. Hence, the edge at the said coordinate for the path $\tilde{p}$ has the terminal vertex $v^{(j)}=\{v\} \in V_j$ in $\tilde{\mathcal{G}}_j$. This is a contradiction since any vertex in $\tilde{\mathcal{G}}_j$ is a subset of $j$ distinct vertices from $\mathcal{G}$. Overall, this shows that $x \not \in \tilde{\mathcal{X}}_j$ and consequently $\tilde{\mathcal{X}}_j \subsetneq \mathcal{X}$.
	
	Since $\tilde{\mathcal{X}}_j$ is a proper subshift of $\mathcal{X}$, the topology entropy of $\tilde{\mathcal{X}}_j$ is strictly less than that of $\mathcal{X}$ (see \cite{Lind-Marcus}). This implies that
	$$\ln \rho (\tilde{A}_j) = h(\tilde{\mathcal{X}}_j) < h(\mathcal{X}) = \ln \lambda$$
	and hence $\rho (\tilde{A}_j) < \lambda$.
\end{proof}

Tracing back the arguments above, we may now deduce the orbit growth of $\mathcal{X}$ from Theorem \ref{theorem: zeta function approach}.

\begin{theorem}\label{theorem: main result}
	Let $\mathcal{X}$ be an irreducible sofic shift with the topological entropy $\ln \lambda>0$. Suppose that its minimal right-resolving presentation has period $p$. Then,
	$$\pi_\sigma(N) \sim \frac{p \lambda^{p\left(\left\lfloor\frac{N}{p}\right\rfloor+1\right)}}{N(\lambda^p-1)},\quad \mathcal{M}_\sigma(N) \sim \frac{pe^{-\gamma}}{\alpha N} \quad\textrm{and}$$
	$$\mathscr{M}_\sigma(N) = \ln \left\lfloor\frac{N}{p}\right\rfloor + \gamma + \ln \alpha -C +o(1)$$
	where $\gamma$ is the Euler-Mascheroni constant, $\alpha$ is a positive constant specified by
	$$\alpha = \lim_{z \rightarrow \lambda^{-1}} (1-\lambda^pz^p)\cdot \zeta_{\sigma}(z)$$
	where $\zeta_{\sigma}(z)$ is the Artin-Mazur zeta function in (\ref{zeta function}), and $C$ is another positive constant specified in (\ref{equation: C}).
\end{theorem}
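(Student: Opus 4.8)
The plan is to verify that $\zeta_\sigma(z)$ meets the hypotheses of Theorem \ref{theorem: zeta function approach} with multiplicity $m=1$ and period $p$, and then read off the three asymptotics by substituting $m=1$ and $e^h=\lambda$. Essentially all of the analytic content has been front-loaded into the Lemma, so what remains is eigenvalue bookkeeping: confirming where the dominant spectrum sits and collapsing it into the required pole factor.

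First I would isolate the $j=1$ block from the rest. Since $A_1=A_\mathcal{G}$ is irreducible of period $p$ and spectral radius $\lambda$, Perron--Frobenius supplies exactly the $p$ simple eigenvalues $z_k=\lambda\exp(2k\pi\boldsymbol{i}/p)$ on the circle $\lvert\mu\rvert=\lambda$, while every remaining eigenvalue of $A_1$ has modulus $<\lambda$. Because $j=1$ carries the exponent $(-1)^1=-1$, these dominant eigenvalues contribute $\prod_{k=0}^{p-1}(1-z_k z)^{-1}$ to $\zeta_\sigma$, and the elementary identity $\prod_{k=0}^{p-1}(1-\lambda\exp(2k\pi\boldsymbol{i}/p)z)=1-\lambda^p z^p$ collapses this to the single factor $(1-\lambda^p z^p)^{-1}$. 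By the Lemma we have $\rho(A_j)\le\rho(\tilde A_j)<\lambda$ for every $j\neq1$, so no eigenvalue of any $A_j$ with $j\geq2$ reaches modulus $\lambda$; the entire peripheral circle is therefore accounted for by these $p$ simple poles, each of order one, giving $m=1$.

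Next I would collect the surviving factors into $\alpha(z)=\prod_{j=1}^S\prod_{\lvert\mu\rvert<\lambda}(1-\mu z)^{(-1)^j}$, so that $\zeta_\sigma(z)=\alpha(z)/(1-\lambda^p z^p)$. Each factor $(1-\mu z)^{\pm1}$ has its unique zero or pole at $z=\mu^{-1}$, of modulus $\lvert\mu\rvert^{-1}=(\lambda/\lvert\mu\rvert)\lambda^{-1}\ge R\lambda^{-1}$ with $R=\min_{\lvert\mu\rvert<\lambda}\{\lambda/\lvert\mu\rvert\}>1$; hence $\alpha$ is analytic and non-vanishing on $\lvert z\rvert<R\lambda^{-1}$, in particular at $z=\lambda^{-1}$. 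This is exactly the form demanded by Theorem \ref{theorem: zeta function approach} with $h=\ln\lambda$, multiplicity $m=1$, period $p$, and $\alpha=\alpha(e^{-h})=\alpha(\lambda^{-1})$; the identification $\alpha=\lim_{z\to\lambda^{-1}}(1-\lambda^p z^p)\zeta_\sigma(z)$ is then immediate from the displayed factorisation. Substituting $m=1$ and $e^h=\lambda$ into the three conclusions of that theorem reproduces the stated formulas for $\pi_\sigma$, $\mathcal{M}_\sigma$ and $\mathscr{M}_\sigma$ verbatim.

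I do not expect a serious obstacle, since the one genuinely nontrivial input --- the strict entropy drop forcing $\rho(A_j)<\lambda$ for $j\neq1$ --- is already secured by the Lemma. The two points deserving a word of care are that the dominant eigenvalues live \emph{only} in the $j=1$ block, so they can be neither cancelled against nor duplicated by factors from other blocks (again guaranteed by the Lemma), and the positivity of the constant $\alpha$. The latter follows from $\zeta_\sigma(z)=\exp\bigl(\sum_{n\ge1}F_\sigma(n)z^n/n\bigr)$ having non-negative Taylor coefficients: along $z\uparrow\lambda^{-1}$ the function is real, positive and increases to $+\infty$, while $1-\lambda^p z^p\downarrow0^+$, so their product has a finite positive limit precisely because the pole at $\lambda^{-1}$ is simple.
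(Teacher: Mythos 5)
Your proposal is correct and follows essentially the same route as the paper: isolate the Perron--Frobenius eigenvalues $z_k$ of $A_1=A_\mathcal{G}$ to produce the factor $(1-\lambda^p z^p)^{-1}$, use the Lemma (via $\rho(A_j)\le\rho(\tilde A_j)<\lambda$ for $j\neq 1$) to push all other zeros and poles beyond radius $\lambda^{-1}$, and then apply Theorem~\ref{theorem: zeta function approach} with $m=1$, $h=\ln\lambda$ and period $p$. Your explicit check of the identity $\prod_{k=0}^{p-1}(1-z_k z)=1-\lambda^p z^p$ and of the positivity of $\alpha$ via the non-negative Taylor coefficients of $\zeta_\sigma$ are details the paper leaves implicit, but they are consistent with its argument.
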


\section{Additional Remarks}

\subsection{Periods of a sofic shift and its presentation}

For an irreducible sofic shift, its period is defined as $\gcd \{n \in \mathbb{N} \mid F_\sigma(n)>0\}$, where $F_\sigma(n)$ is the number of periodic points of period $n$. This period must be a divisor of the period of the minimal right-resolving presentation, but they are not necessarily equal (see \cite{Lind-Marcus}). Notice that the period of the presentation (which is $p$) appears in the asymptotic results in Theorem \ref{theorem: main result}, instead of the period of the sofic shift itself. This is somewhat interesting that the period of the presentation is more prominent in determining the orbit growth.

\subsection{Orbit growth of a periodic-finite-type shift}

A periodic-finite-type shift (which includes a shift of finite type) is a sofic shift. We obtained its orbit growth in \cite{Nordin-Noorani_PFT} by using its Moision-Siegel presentation \cite{Beal,Manada-Kashyap}. However, our previous work was incomplete since the result was only applicable for the one with an irreducible such presentation. An irreducible periodic-finite-type shift may have a reducible Moision-Siegel presentation. Nonetheless, Theorem \ref{theorem: main result} above covers both cases, and it indeed agrees with our previous result.

\subsection{Applications to finite group and homogeneous extensions of a sofic shift}

Our result in Theorem \ref{theorem: main result} is crucial to prove Chebotarev-like theorems for finite group extensions of sofic shifts, which is similar to the case for shifts of finite type \cite{Parry-Pollicott,Noorani-Parry,Mohamed-Noorani}. We only provide a simple explanation here about this topic. Let $\mathcal{X}$ be a sofic shift. Consider a finite group $K$ and a function $\Psi:\mathcal{X} \rightarrow K$ which depends only on first two coordinates, i.e. if $x = \{x_k\}_{k \in \mathbb{Z}}$ and $y = \{y_k\}_{k \in \mathbb{Z}}$ in $\mathcal{X}$ satisfy $x_0x_1=y_0y_1$, then $\Psi(x)=\Psi(y)$. The finite group extension of $\mathcal{X}$ with $K$ is the product $\mathcal{X} \times K$ paired with a map $\hat{\sigma} : \mathcal{X} \times K \rightarrow \mathcal{X} \times K$ where $\hat{\sigma}(x,g) = \left(\sigma(x), \Psi(x)g\right)$. We also define a free action of $K$ on $X \times K$ by $h \cdot (x, g) = (x, gh)$ for any $h \in K$. Under these settings, each closed orbit $\tau$ of $\mathcal{X}$ is associated with a conjugacy class of $K$. This is called the Frobenius class of $\tau$, which is denoted as $[\tau]$. Indeed, the class $[\tau]$ is the conjugacy class for the element $\Psi\left(\sigma^{\lvert\tau\rvert-1}(x)\right)\Psi\left(\sigma^{\lvert\tau\rvert-2}(x)\right)\hdots\Psi\left(x\right) \in K$ for any choice of $x \in \tau$. Now, we can count the closed orbits having the same Frobenius class by using our orbit counting functions. In other words, for a conjugacy class $C$ of $K$, define $\pi_\sigma^C(N)$, $\mathcal{M}_\sigma^C(N)$ and $\mathscr{M}_\sigma^C(N)$ similarly but they sum over the closed orbits with the Frobenius class $C$. We can follow the same arguments in \cite{Parry-Pollicott,Noorani-Parry,Mohamed-Noorani} and apply our result here to obtain the asymptotic behaviours of these counting functions. For instance, if $\hat{\sigma}$ is topologically mixing on $\mathcal{X} \times K$, then
$$\pi_\sigma^C(N) \sim \frac{\lvert C \rvert}{\lvert K \rvert} \cdot \frac{\lambda^{N+1}}{N(\lambda-1)},\quad \mathcal{M}_\sigma^C(N) \sim \frac{e^{-\gamma \lvert C \rvert/\lvert K \rvert}}{\beta N^{\lvert C \rvert / \lvert K \rvert}} \quad\textrm{and}$$
$$\mathscr{M}_\sigma^C(N) = \frac{\lvert C \rvert}{\lvert K \rvert} \ln N + \frac{\lvert C \rvert}{\lvert K \rvert}\gamma + \ln \beta - \mathcal{C} +o(1)$$
where $\ln \lambda$ is the topological entropy of $\mathcal{X}$, $\gamma$ is the Euler-Mascheroni constant, and $\beta$ and $\mathcal{C}$ are positive constants. We may also prove similar theorems for finite homogeneous extensions of sofic shifts, and again, the arguments follow similarly as in \cite{Noorani-Parry,Mohamed-Noorani}.

\bmhead{Acknowledgments}

The authors would like to thank the Ministry of Higher Education, Malaysia (grant number FRGS/1/2019/STG06/UKM/01/3) for the financial support in this research.

\bmhead{Competing Interests}

The authors declare that they have no competing interest.



\end{document}